\def\A{\mathcal{A}}
\def\H{\mathcal H}
\def\L{\mathcal L}
\def\P{\mathcal P}
\def\X{\mathcal X}
\def\amslatex{$\mathcal{A}\kern-.1667em\lower.5ex\hbox{$\mathcal{M}$}\kern-.125em\mathcal{S}$-\LaTeX}
\newtheorem{set}{set}[section]
\newtheorem{Corollary}[set]{Corollary}
\newtheorem{Definition}[set]{Definition}
\newtheorem{Remark}[set]{Remark}
\newtheorem{Theorem}[set]{Theorem}
\newcommand{\define}{\mathrel{\hbox{$\equiv$\hskip -.90em \lower .47ex \hbox{$\leftharpoondown$}}}}
\newcommand{\enifed}{\mathrel{\hbox{$\equiv$\hskip -.90em \lower .47ex \hbox{$\rightharpoondown$}}}}
\begin{document}
\title {\bf   Non-Gaussian Random Matrix Models for Two-faced Families of Random Variables Having Bi-free Central Limit Distributions }
\author{Mingchu Gao}
\address{School of Mathematics and Information Science,
Baoji University of Arts and Sciences,
Baoji, Shaanxi 721013,
China; and
Department of Mathematics,
Louisiana College,
Pineville, LA 71359, USA}
\email[Mingchu Gao]{mingchu.gao@lacollege.edu}

\date{}
\begin{abstract} In this paper, we construct random two-faced families of  matrices with non-Gaussian entries to approximate a two-faced family of random variables having a bi-free central limit distribution. We prove that, under modest conditions weaker than independence, a family of random two-faced family of  matrices with non-Gaussian entries is  asymptotically bi-free from a two-faced family  of constant diagonal matrices.
\end{abstract}
\maketitle
{\bf AMS Mathematics Subject Classification (2010)} 46L54.

{\bf Key words and phrases} Bi-free central limit distributions, Non-Gaussian random matrices.
\section*{Introduction}
D. Voiculescu introduced the notion of {\sl pairs of faces of random variables} in \cite{DV}, initiating a new research area in free probability, {\sl free probability for pairs of faces}, or {\sl bi-free probability}. Since then, the theory has quickly developed by generalizing ideas and results from free probability to this new setting.   For instance, Voiculescu \cite{DV}  determined bi-free central limit distributions. \cite{CNS2} and \cite{CNS1} developed the combinatorial aspect of the theory and bi-free probability with amalgamation. In addition, the notion of bi-free infinite divisible distributions for  commutative pairs of random variables was developed in \cite{GHM}, which was generalized later to the case of arbitrary pairs of random variables in \cite{MG}.

One of the most important achievements in free probability is Voiculescu's work in \cite{DV1}, which discovered a connection between free probability and random matrices: the phenomenon of freeness of random variables, initially coming from free products of group algebras (or operator algebras), or creation and annihilation operators on the full Fock space of a Hilbert space, could  be recognized as the asymptotic behavior of independent Gaussian  random matrices, as the matrix size approaches infinity. Since then, the study on the relations between free probability and random matrices has been a main theme in free probability.

P. Skoufranis \cite{PS} generalized Voiculescu's main results in \cite{DV1} to bi-free probability. Among other results in \cite{PS}, Skoufranis constructed bi-matrix models of Gaussian random variables for a bi-free central limit distribution of a commutative pair of random variables with a non-singular positive definite covariance matrix. He also  proved in \cite{PS} asymptotic bi-freeness  of  independent random pairs of  matrices of Gaussian random variables from pairs of constant matrices. There are non-Gaussian random matrix models for semicircle distributions and results of asymptotic freeness  of random Gaussian matrices from constant matrices in the literature (e. g., \cite{KD}, \cite{HP} and \cite{AGZ}). It is worth to notice that the family of limit random variables of random matrices in the literature is a free family of semicircular random variables. More generally, Remark 4.12 in \cite{PS} constructed a two-faced family of Gaussian random matrices to approximate a bi-free central limit distribution with a non-singular positive definite covariance matrix. As a consequence of Remark 4.12 in \cite{PS}, one can get a Gaussian random matrix model for a semicircular family  with a non-singular positive definite covariance matrix defined in 8.15 of \cite{NS}. In this paper, we construct  two-faced families of non-Gaussian random matrices to approximate in distribution to a two-faced family of random variables having a bi-free central limit distribution with an arbitrary positive definition covariance matrix. As a consequence of our approximation result, we get a random matrix model for an arbitrary semicircular family of a positive definite covariance matrix defined in 8.15 in \cite{PS}. We also prove asymptotic bi-freeness of a family of two-faced families of non-Gaussian random matrices from a two faced family of constant diagonal matrices provided that the families of random matrices satisfy certain conditions weaker than independence.

This paper is organized as follows. Besides this introductory section, this paper consists of two sections. In Section 1,  we recall necessary background materials on bi-free probability used in sequel. Section 2 is devoted to proving our results. We first construct  two faced families of non-Gaussian random matrix to approximate a two-faced family of random variables having a bi-free central limit distribution (Theorem 2.2). Note that a centered Gaussian family of random matrices must have a non-singular positive definite covariance matrix (Definition 4.1 in \cite{PS} and Definition 22.1 in \cite{NS}). Therefore, if a bi-free central limit distribution (or a semicircular family of random variables) has a  bi-matrix model consisting of Gaussian random matrices (or a Gaussian random matrix model), then the covariance matrix of the bi-free central limit distribution (or the semicircular family) must be non-singular.
Our non-Gaussian random  bi-matrix model provides a random bi-matrix model for any bi-free central limit distribution with a positive definite covariance matrix. As a consequence, we get a random matrix model for an arbitrary semicircular family defined in 8.15 in \cite{NS} (Remark 2.3). Theorem 4.11 in \cite{PS} shows that independent random pairs of Gaussian matrices are asymptotic bi-free. We prove that a family of random two-faced families of non-Gaussian matrices is asymptotic bi-free provided the families satisfies certain conditions ((2.3), (2.4), and (2.5)), which are weaker than the independence of the families (Theorem 2.4 and Remark 2.5). Finally, we prove asymptotic freeness of a family of non-Gaussian random matrices from a family of constant diagonal matrices, which is similar to, but different from, Theorem 2.1 in \cite{KD} (Theorem 2.6 and Remark 2.7). As a consequence of Theorem 2.6 of this paper and Theorem 4.13 in \cite{PS}, we get asymptotic bi-freeness of a family of two-faced families of non-Gaussian random matrices from a two-faced family of constant diagonal matrices (Corollary 2.8).
\section{Background on Bi-free Probability and Bi-matrix Models}
In this section we recall some background materials on combinatorial aspects of  bi-free probability, operator-valued bi-free probability, and bi-matrix models.  The reader is referred to \cite{DV}, \cite{CNS2}, \cite{CNS1}, and \cite{PS1} for details on bi-free probability.
\subsection{Combinatorics on Bi-free Probability}
 Let $\chi:\{1, 2, \cdots, n\}\rightarrow  \{l,r\}$. Let's record explicitly where are the occurrences of $l$ and $r$ in $\chi$. $\chi^{-1}(l)=\{i_1<i_2<\cdots <i_p\}$ and $\chi^{-1}(r)=\{i_{p+1}>i_{p+2}>\cdots >i_n\}$. Then we define a permutation $s_\chi$ on $\{1, 2, \cdots, n\}: s_\chi(k)=i_k$, for $k=1, 2,\cdots, n$.
 For a subset $V=\{i_1< i_2< \cdots< i_k\}$ of the set $\{1, 2, \cdots, n\}$, $a_1\cdots, a_n\in \A$, define $$\varphi_V(a_1, \cdots, a_n)=\varphi(a_{i_1}a_{i_2}\cdots a_{i_k}).$$  Let $\P(n)$ be the set of all partitions of $\{1, 2, \cdots, n\}$. For a partition $\pi=\{V_1, V_2, \cdots, V_d\}\in \P(n)$, we define $$\varphi_\pi(a_1, \cdots, a_n):=\prod_{V\in \pi}\varphi_V(a_1, \cdots, a_n).$$  Define $BNC(n,\chi)=\{s_\chi\circ \pi:\pi\in NC(n)\}$, where $NC(n)$ is the set of all non-crossing partitions of $\{1, 2, \cdots, n\}$ (Lecture 9 in \cite{NS}).  A $\sigma\in BNC(n,\chi)$ is called a {\sl bi-non-crossing} partition of $\{1, 2, \cdots, n\}$.  Let $(\A,\varphi)$ be a non-commutative probability space. The bi-free cumulants $(\kappa_\chi:\A^n\rightarrow \mathbb{C})_{n\ge 1, \chi:\{1, 2, \cdots, n\}\rightarrow \{l,r\}}$ of $(\A,\varphi)$ are defined by
 $$\kappa_\chi (a_1, \cdots, a_n)=\sum_{\pi\in BNC(n,\chi)}\varphi_{\pi}(a_1, \cdots, a_n)\mu_n(s^{-1}_\chi\circ\pi, 1_n),\eqno (1.1)$$ for $n\ge 1, \chi:\{1,2,\cdots, n\}\rightarrow \{l,r\}, a_1, \cdots, a_n\in A$, where $\mu_n$ is the Mobius function on $NC(n)$ (Lecture 10 in \cite{NS}). For a subset $V=\{i_1, i_2, \cdots, i_k\}\subseteq \{1, 2, \cdots, n\}$, let $\chi_V$ be the restriction of $\chi$ on $V$. We define $\kappa_{\chi, V}(a_1, a_2, \cdots, a_n)=\kappa_{\chi_V}(a_{i_1}, a_{i_2}, \cdots, a_{i_k})$.
 For a partition $\pi=\{V_1, V_2, \cdots, V_k\} \in BNC(n,\chi)$, we define $\kappa_{\chi, \pi}(a_1, \cdots, a_n)=\prod_{V\in \pi}\kappa_{\chi, V}(a_1, a_2, \cdots, a_n)$.  Then the bi-free cumulant appeared in $(1.1)$ is $\kappa_{\chi, 1_n}(a_1, \cdots, a_n)$. The bi-free cumulants are determined by the equation $$\varphi (a_1 a_2 \cdots a_n)=\sum_{\pi\in BNC(n, \chi)}\kappa_{\chi,\pi}(a_1, a_2, \cdots, a_n), \forall a_1, \cdots, a_n\in \A,   \eqno (1.2)$$ for a $\chi:\{1, 2, \cdots, n\}\rightarrow \{l,r\}$.

 Charlesworth, Nelson, and Skoufranis \cite{CNS2} proved that
  $$z'=((z'_i)_{i\in I}, (z'_j)_{j\in J}), z''=((z''_i)_{i\in I}, (z''_j)_{j\in J})$$ in a non-commutative probability space $(\A,\varphi)$ are bi-free if and only if
 $$\kappa_\chi(z_{\alpha(1)}^{\epsilon_1},z_{\alpha(2)}^{\epsilon_2}, \cdots, z_{\alpha(n)}^{\epsilon_n})=0, \eqno (1.3)$$ whenever $\alpha:\{1,2,\cdots, n\}\rightarrow I\bigsqcup J$, $\chi:\{1, 2, \cdots, n\}\rightarrow \{l,r\}$ such that $\alpha^{-1}(I)=\chi ^{-1}(\{l\})$,   $\epsilon:\{1,2, \cdots, n\}\rightarrow \{',''\}$ is not constant, and $n\ge 2$ (Theorem 4.3.1 in \cite{CNS2}).
\begin{Definition}[\cite{DV}, Section 7] Let $(\A,\varphi)$ be a non-commutative probability space and let $I$ and $J$ be tow disjoint index sets. A two-faced family $(\{s_i\}_{i\in I}, \{s_j\}_{j\in J})$ of random variables in $\A$ is said to have a (centered) bi-free central limit distribution if all bi-free cummulants of order 1 and of order at least 3 of the family are zeros. Specially, we call such a pair $(s_l, s_r)$ a bi-free pair of semicircular random variables.
\end{Definition}
\subsection{Structures of Operator-valued Bi-freeness}
Let $B$ be a unital algebra.
 A $B$-$B$-{\sl non-commutative probability space} is a triple $(\A,E,\varepsilon)$ where $\A$ ia a unital algebra, $\varepsilon: B\otimes B^{op}\rightarrow \A$ is a unital homomorphism such that $\varepsilon|_{B\otimes 1_B}$ and $\varepsilon|_{1_B\otimes B}$ are injective, and $E:\A\rightarrow B$ is a linear map such that $E(\varepsilon(b_1\otimes b_2)a)=b_1E(a)b_2$ and $E(a\varepsilon(b\otimes 1_B))=E(a\varepsilon(1_B\otimes b))$. Let $L_b=\varepsilon(b\otimes 1_B)$ and $R_b=\varepsilon(1_B\otimes b)$. The unital subalgebras $$\A_l=\{a\in \A: aR_b=R_ba, \forall b\in B\}, \A_r=\{a\in \A: L_ba=aL_b,\forall b\in B\}$$ are called the {\sl left } and {\sl right} algebras of $\A$, respectively.
The following we give a canonical example of $B$-$B$-non-commutative probability spaces.

A {\sl $B$-$B$-bi-module with a specified $B$-vector state} is a triple $(\mathcal{X}, \mathcal{X}^0, p)$ where $\mathcal{X}=B\oplus \mathcal{X}^0$, a direct sum of $B$-$B$ bi-modules and $p:\mathcal{X}\rightarrow B$, $p(b\oplus\eta)=b$. Let $\L(\X)$ denote the set of linear operators on $\mathcal{X}$. For $b\in B$, define $L_b, R_b\in \L(\X)$ by $$L_b(x)=bx, R_b(x)=xb, \forall x\in \X.$$ Similarly, we can define left and right algebras as follows $$\L_l(\X):=\{A\in \L(\X): AR_b=R_bA, \forall b\in B\}, \L_r(\X):=\{A\in \L(\X):AL_b=L_bA, \forall b\in B\}. $$ Given a $B$-$B$-bi-module with a specified $B$-vector state $\{\X, \X^0, p\}$, the expectation $E_{\L(\X)}$ of $\L(\X)$ onto $B$ is defined by $$E_{\L(\X)}(A)=p(A1_B), \forall A\in \L(\X).$$ Define $\varepsilon: B\otimes B^{op}\rightarrow \A$, $\varepsilon(b_1\otimes b_2)=L_{b_1}R_{b_2}$. Then $(\L(\X), E_{\L(\X)}, \varepsilon)$ is a (concrete) $B$-$B$-non-commutative probability space. Moreover, Theorem 3.2.4 in \cite{CNS1} demonstrated that every abstract $B$-$B$-non-commutative probability space can be represented inside a concrete $B$-$B$-non-commutative probability space.

\subsection{Bi-Matrix Models}
Let $\{E_{i,j}(N)\}_{i,j=1}^N$ denote the matrix unit system of $M_N(\mathbb{C})$, $A=[a_{i,j}]=\sum_{i,j=1}^Na_{i,j}\otimes E_{i,j}(N)$ denote a matrix in $M_N(\mathbb{C})$. Tr will denote the trace on $M_N(\mathbb{C})$ defined by $Tr(A)=\sum_{i=1}^Na_{i,i}$.
In the following, we introduce the general construction for bi-matrix models for matrices with elements in $\L(\X)$.

Let $(\X, \X^0, \xi, p)$ be a pointed vector space(see the beginning of Section 1.1), $p: \X\rightarrow \mathbb{C}$, $p(\lambda\xi\oplus \eta)=\lambda$. For $N\in \mathbb{N}$, consider $M_N(\mathbb{C})$-$M_N(\mathbb{C})$ bi-modular actions on $\X_N:=M_N(\X)$, $$[a_{i,j}]\cdot[\eta_{i,j}]=[\sum_{k=1}^Na_{i,k}\eta_{k,j}], [\eta_{i,j}]\cdot[a_{i,j}]=[\sum_{k=1}^Na_{k,j}\eta_{i,k}],$$ for all $[a_{i,j}]\in M_N(\mathbb{C})$ and $[\eta_{i,j}]\in \X_N$. Then $\X_N$ becomes an $M_N(\mathbb{C})$-$M_N(\mathbb{C})$-bi-module with a specified $M_N(\mathbb{C})$-vector sate via $$\X_N=M_N(\mathbb{C}\xi)\oplus M_N(\X^0),$$
and a linear map $p_{\X_N}:\X_N\rightarrow M_N(\mathbb{C})$ defined by $p_{\X_N}([\eta_{i,j}])=[p(\eta_{i,j})]$.

$\X_N$ is called the $M_N(\mathbb{C})$-$M_N(\mathbb{C})$ -bi-module associated with $(\X,p)$ and $(\L(\X_N), E_{\L(\X_N)}, \varepsilon)$ is called the $M_N(\mathbb{C})$-$M_N(\mathbb{C})$-non-commutative probability space associated with $(\X,p)$. The expectation $E_{\L(\X_N)}:\L(\X_N)\rightarrow M_N(\mathbb{C})$ has the form $E_{\L(\X_N)}(A)=p_{\X_N}(A1_{N,\xi})$, where $A\in \L(\X_N)$ and $1_{M,\xi}$ is the diagonal matrix $diag(\xi, \xi, \cdots, \xi)$.

To consider bi-matrix models, we define two homomorphisms $L: M_N(\L(\X))\rightarrow \L(\X_N)$, and $R: M_N(\L(\X)^{op})^{op}\rightarrow \L(\X_N)$, $$L([T_{i,j}])[\eta_{i,j}]=[\sum_{k=1}^NT_{i,k}(\eta_{k,j})], R([T_{i,j}])[\eta_{i,j}]=[\sum_{k=1}^NT_{k,j}\eta_{i,k}],$$ for $[\eta_{i,j}]\in \X_N$ and $[T_{i,j}]\in M_N(\L(\X))$.


\section{Main Results}

In this section, we shall construct  bi-matrix models of non-Gaussian random variables to approximate bi-free central limit distribution with a positive definite covariance matrix. We also prove asymptotic freeness of a family of   two-faced families of non-Gaussian random matrices from a two-faced family of constant diagonal matrices. We assume that all linear functionals on non-commutative probability spaces under consideration  are tracial.

Let $\A:=L^{\infty-}(\Omega, \mu)=\cap_{p\ge 1}L^p(\Omega, \mu)$, where $(\Omega, \mu)$ is a probability space, and  $E(f)=\int_\Omega f(t)d\mu$, for $f\in \A$. It was proved that $(\A,E)$ is a non-commutative probability (See Exercise 1.22 in \cite{NS}). Let $I$ and $J$ be two disjoint index sets. For $N\in \mathbb{N}$, an $N\times N$ {\sl random two-faced family of matrices} on $\A$ is a two-faced family  $((X^i)_{i\in I},(X^j)_{j\in J})$ where $X^i=L([X_{l,k;i}]_{N\times N})$ and $X^j=R([X_{l,k;j}]_{N\times N})$ are left and right matrices, respectively, with entries from $\A\subset \L(\A)$.

Let $I$ and $J$ be disjoint index sets, and $((s_i)_{i\in I}, (s_j)_{j\in J})$ be a two-faced family of self-adjoint random variables in a $*$-probability space, which  has a bi-free central limit distribution with covariance matrix $C$. Then $C$ is positive definite. In fact, for $m\in \mathbb{N}$ and $\chi:\{1, 2, \cdots, m\}\rightarrow I\cup J$, we have $$C_\chi=(c_{\chi(k), \chi(l)})_{m\times m}=(\varphi(s_{\chi(k)}s_{\chi(l)}))_{m\times m}=\varphi\left(\left(\begin{array}{c}s_{\chi(1)}\\
s_{\chi(2)}\\
\vdots\\
s_{\chi(m)}
\end{array}\right)\left(\begin{array}{cccc}s_{\chi(1)}&s_{\chi(2)}&\cdots&s_{\chi(m)}\end{array}\right)\right)\ge 0.$$ By Theorem 7.6 in \cite{DV}, the family has a bi-free central limit $*$-distribution. Moreover, $c_{k,l}=\varphi(s_ks_l)=\varphi(s_ls_k)=\overline{\varphi(s_ks_l)}\in \mathbb{R}$. It follows that $C$ is a positive definite matrix with real entries. Generally, having a bi-free central limit distribution does not necessarily imply that the covariance matrix is positive definite.  For instance, in the Fock space representation of such a distribution, $s_i=l(h(i))+l^*(h^*(i))$, for $i\in I$, $s_j=r(h(j))+r^*(h^*(j))$, for $j\in J$, where $h, h^*: I\cup J\rightarrow\H$ (Theorem 7.4 in \cite{DV}). The equality $c_{i,j}=\langle h(j), h^*(i)\rangle=\overline{c_{j,i}}$ holds if $h=h^*$.

The following definition modifies Definition 4.5 in \cite{PS}, adopting some properties of random matrices described in \cite{KD}. By the way, the entries of matrices in Definitions 4.1 and 4.5 in \cite{PS} should be taken from $L^{\infty-}$, not from $L^\infty(\Omega, \mu)$, because Gaussian random variables are unbounded.

\begin{Definition}
Let $I$ and $J$ be disjoint index sets, and $C=(c_{k, m})_{k,m\in I\coprod J}$ be a positive definite matrix in $M_{I\cup J}(\mathbb{C})$ with real entries. Let $(\Omega, P)$ be a probability space and $N\in \mathbb{N}$.   A self-adjoint $C$-random two-faced familiy of $N\times N$ matrices is an $N\times N$ two-faced family of matrices $((X^i_N)_{i\in I}, (X^j_N)_{j\in J})$ on $L^{\infty-}(\Omega, \mu)$ with $X^k_N=L([X^k_{i,j;N}]_{N\times N})$ for $k\in I$ and $X^k_N=R([X^k_{i,j;N}]_{N\times N})$ for $k\in J$, where
\begin{enumerate}
\item $E(X^k_{i,j;N})=0$, for all $i, j=1, 2, \cdots, N$, and $k\in I\cup J$,
\item ${X^k_{j,i;N}}=\overline{X^k_{i,j;N}}$, for all $i, j=1, 2, \cdots, N$, and $k\in I\cup J$,
\item $\{\{X^k_{i,j;N}: k\in J \cup J\}: 1\le i\le j\le N\}$ is an independent family of sets of random variables,
\item $E(X^{k_1}_{i,j;N}X^{k_2}_{j,i;N})=\frac{1}{N}c_{k_1, k_2}$, for all $i, j=1, 2, \cdots, N$, and $k_1, k_2\in I\cup J$,
\item Moreover, for an $m\in \mathbb{N}$ and a function $\chi: \{1, 2, \cdots, m\}\rightarrow I\cup J$, there exist a positive constant number $M(\chi, m)$ and $N_0\in \mathbb{N}$ such that $$\sup_{1\le i\le j\le N}|E(X^{\chi(1)}_{\lambda(1),\tau(1);N}X^{\chi(2)}_{\lambda(2), \tau(2);N}\cdots X_{\lambda(m),\tau(m);N}^{\chi(m)})|\le M(\chi, m)N^{-m/2},$$ whenever  $N>N_0$, where $(\lambda(l),\tau(l))=(i, j)$, or $(\lambda(l),\tau(l))=(j, i)$, for $l=1, 2, \cdots, m$.
\end{enumerate}
\end{Definition}

\begin{Theorem} Let $I$ and $J$ be two disjoint index sets, $C$ be a positive definite matrix of size $I\cup J$ with real entries, and  $((a_i)_{i\in I}, (a_j)_{j\in J})$ be a two-faced family of self-adjoint random variables having a bi-free central limit distribution in a $*$-probability space $(\A,\varphi)$ with covariance matrix $C$. For each $N\in \mathbb{N}$, let $((X^i_N)_{i\in I}, (X^j_N)_{j\in J})$ be a two-faced family of self-adjoint $C$-random two-faced family of $N\times N$ matrices defined in Definition 2.1.
Then for every $n\in \mathbb{N}$ and every function $\chi:\{1, 2, \cdots, n\}\rightarrow I\cup J$, we have
$$\lim_{N\rightarrow \infty}\frac{1}{N}Tr(E_{\L(\mathcal{X}_N)}(X_N^{\chi(1)} X_N^{\chi(2)}\cdots X^{\chi(n)}_N))=\varphi(a_{\chi(1)}a_{\chi(2)}\cdots a_{\chi(n)}).\eqno (2.1) $$
\end{Theorem}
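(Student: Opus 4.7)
The plan is to compare the asymptotic expansion of the left-hand side of (2.1) with the moment-cumulant formula on the right-hand side. By Definition 1.1, all bi-free cumulants of $((a_i)_{i\in I},(a_j)_{j\in J})$ of order $\ne 2$ vanish, so (1.2) reduces to
$$\varphi(a_{\chi(1)}\cdots a_{\chi(n)})=\sum_{\pi}\prod_{\{k,l\}\in\pi}c_{\chi(k),\chi(l)},$$
where the sum runs over bi-non-crossing \emph{pair} partitions $\pi\in BNC(n,\chi)$; in particular the right-hand side vanishes for odd $n$. The goal is then to show that the left-hand side of (2.1) converges to exactly this sum.

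For the left-hand side, I would unwind the definitions of $L$ and $R$ acting on $\X_N$. Applying $X_N^{\chi(1)}\cdots X_N^{\chi(n)}$ to $1_{N,\xi}$, composing with $p_{\X_N}$, and taking $\frac{1}{N}Tr$ expresses the left-hand side as a normalized sum
$$\frac{1}{N}\sum_{\mathbf{i}}E\Bigl(\prod_{k=1}^{n}X^{\chi(k)}_{\alpha(k),\beta(k);N}\Bigr),$$
where $\mathbf{i}$ ranges over a certain set of index configurations, and for each $k$ the row-column pair $(\alpha(k),\beta(k))$ is determined by adjacent free indices according to whether $\chi(k)\in I$ (the step advances through the ``left'' rows) or $\chi(k)\in J$ (the step advances through the ``right'' columns). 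This is the bi-matrix trace identity, and its combinatorics is captured precisely by the permutation $s_\chi$ of Section 1.1: the natural admissibility condition on $\mathbf{i}$ corresponds to non-crossing compatibility in the $s_\chi$-reordered sequence.

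Using (2) and (3) of Definition 2.1, the $n$ entries may be grouped into equivalence classes according to which unordered row-column pairs $\{\alpha(k),\beta(k)\}$ coincide, and the joint expectation factors over these classes. A class of size one contributes zero by (1); a class of size $m\ne 2$ is bounded by $M(\chi,m)N^{-m/2}$ via (5), and routine index-counting shows the total contribution of any partition containing such a class is $O(N^{-1})$. Thus only pair partitions survive asymptotically, and by (4) each pair $\{k,l\}$ contributes exactly $c_{\chi(k),\chi(l)}/N$.

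The main obstacle is to identify which pair partitions $\pi$ actually contribute in the limit. A genus-type index count shows that, once the row-column pairs are identified according to $\pi$, the number of free indices in $\mathbf{i}$ is exactly $N^{n/2+1}$ when $s_\chi^{-1}\pi\in NC(n)$, i.e.\ when $\pi\in BNC(n,\chi)$, and strictly smaller for every crossing pair partition. Combined with the factor $N^{-n/2-1}$ from the pair estimates and the trace normalization, only bi-non-crossing pair partitions survive as $N\to\infty$, and their total contribution is precisely $\sum_{\pi}\prod_{\{k,l\}\in\pi}c_{\chi(k),\chi(l)}$ summed over bi-non-crossing pair $\pi\in BNC(n,\chi)$, matching the right-hand side. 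The delicate point is this index-counting step, which ties the left-right orientation of the matrix product to the bi-non-crossing structure through $s_\chi$; it substitutes the moment bound (5) for the Wick formula used in Skoufranis's Gaussian construction.
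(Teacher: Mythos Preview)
Your outline follows the paper's route: expand the trace via the bi-matrix formula, group index configurations by the partition $\pi$ they induce on $\{1,\dots,n\}$, discard non-pair partitions using (1) and (5), and identify the surviving pair partitions as bi-non-crossing through a genus count. There is, however, one genuine gap.

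When you group by \emph{unordered} row--column pairs $\{\alpha(k),\beta(k)\}$, a two-element class $\{k,l\}$ can occur in two ways: either $(\alpha(k),\beta(k))=(\beta(l),\alpha(l))$ (transposed indices) or $(\alpha(k),\beta(k))=(\alpha(l),\beta(l))$ (identical ordered pairs). Property~(4) of Definition~2.1 gives $E(X^{k_1}_{i,j;N}X^{k_2}_{j,i;N})=c_{k_1,k_2}/N$ only in the transposed case; for the other orientation nothing beyond the bound~(5) is assumed. Hence your claim ``by (4) each pair $\{k,l\}$ contributes exactly $c_{\chi(k),\chi(l)}/N$'' is not justified, and the genus count you invoke---which yields $N^{n/2+1}$ free indices precisely under the transposed constraints $\alpha(k)=\beta(l),\ \beta(k)=\alpha(l)$---does not cover configurations where some pair has the other orientation. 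In the Gaussian setting this issue never arises because the Wick formula forces the transposed pairing automatically; in the non-Gaussian setting it does not.

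The paper closes this gap by attaching an orientation $\mathcal{O}(k,l)\in\{\pm1\}$ to each pair and splitting pair partitions further: those in which every pair is transposed (property~$P_5$) versus those with at least one same-orientation pair (property~$P_4$). For $P_4$ (together with the blocks of size $\ge 3$, property~$P_3$) one invokes Dykema's index-counting argument (proof of Lemma~2.2 in \cite{KD}, pp.~38--40) to bound the number of admissible index configurations by $N^{(n-1)/2}$; combined with the $O(N^{-n/2})$ bound from~(5) these contribute $O(N^{-1/2})$ and vanish in the limit. Only $P_5$ survives, and there property~(4) applies and your genus count goes through exactly as you described. You need to insert this orientation analysis between the elimination of non-pair blocks and the final genus step.
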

\begin{proof}
Let $s_\chi$ be the permutation in $S_n$ related to $\chi$ defined in Section 1.  By Lemma 3.7 and Remark 3.8 in \cite{PS}, a summand  $E(X^{\chi(1)}_{i_1, j_1;N}\cdots X^{\chi(n)}_{i_n, j_n;N})$ of a diagonal entry of the matrix $E_{\L(\mathcal{X}_N)}(X^{\chi(1)}_N\cdots X^{\chi(n)}_N)$
must satisfy $j_{s_\chi(k)}=i_{s_\chi(k+1)}$, for $1\le k\le n-1$, and $j_{s_\chi(n)}=i_{s_\chi(1)}$. It follows that $j_k=i_{s_\chi(s_\chi^{-1}(k)+1)}$, for $k=1, 2, \cdots, n$, where $n+1$ is substituted by $1$. In this case, $$E(X^{\chi(1)}_{i_1, j_1;N}\cdots X^{\chi(n)}_{i_n, j_n;N})=E(X^{\chi(1)}_{i_1,i_{\eta(1)};N}X^{\chi(2)}_{i_2,i_{\eta(2)};N}\cdots X^{\chi(n)}_{i_n,i_{\eta(n)};N})$$  is a summand of the $(i_{s_\chi(1)}, i_{s_\chi(1)})$-entry of matrix $E_{\L(\mathcal{X}_N)}(X^{\chi(1)}_N\cdots X^{\chi(n)}_N)$, where $\eta(k)=s_\chi(s_\chi^{-1}(k)+1)$, $1\le k\le n$. Furthermore, the $(x,x)$-entry of the matrix is $$\alpha(x,x;N):=\sum_{1\le k\le  n, k\ne s_\chi(1), s_\chi(1)=x}\sum_{i_k=1}^N E(X^{\chi(1)}_{i_1,i_{\eta(1)};N}X^{\chi(2)}_{i_2,i_{\eta(2)};N}\cdots X^{\chi(n)}_{i_n,i_{\eta(n)};N}).$$
Therefore, the left hand side of (2.1) has the following form
$$\lim_{N\rightarrow \infty}\frac{1}{N}Tr(E_{L(\mathcal{X}_N)}(X_N^{\chi(1)} X_N^{\chi(2)}\cdots X^{\chi(n)}_N))=\lim_{N\rightarrow \infty}\frac{1}{N}\sum_{i_{s_\chi(1)}=1}^N\alpha(i_{s_\chi(1)}, i_{s_\chi(1)};N). $$
We want to find out summands of diagonal entries of the matrix  $E_{L(\mathcal{X}_N)}(X^{\chi(1)}_N\cdots X^{\chi(n)}_N)$ which have  possible non-zero contributions to the left hand side of (2.1). In order to evaluate an expression  $$E(X^{\chi(1)}_{i_1,i_{\eta(1)};N}X^{\chi(2)}_{i_2,i_{\eta(2)};N}\cdots X^{\chi(n)}_{i_n,i_{\eta(n)};N}),$$ for  given indices $i_1, \cdots, i_n$, we use the independence condition (3) in Definition 2.1, and collect $X^{\chi(k)}_{i_k, i_{\eta(k)};N}$'s together into mutually independent groups. Then we multiply together the expectations of  the groups.     Given a family $\{i_1, \cdots, i_n\}$ of numbers in $\{1, 2, \cdots, N\}$,  we can define a partition  $\pi$ of $\{1, 2, \cdots, n\}$, $\pi=\{V_1, V_2, \cdots, V_k\}$, where $x, y\in V$ if and only if $(I):$ $i_x=i_y$ and $i_{\eta(x)}=i_{\eta(y)}$, or $(II):$ $i_x=i_{\eta(y)}$ and $i_y=i_{\eta(x)}$. We can define an orientation function $\mathcal{O}:\{(x,y): x, y\in V, V\in \pi\}\rightarrow \{1, -1\}$ by $\mathcal{O}(x,y)=\left\{\begin{array}{ll}1,& \text{if } (I),\\
-1,& \text{if } (II)\end{array}\right.$. It follows from Property (3) in Definition 2.1 that $$E(X^{\chi(1)}_{i_1,i_{\eta(1)};N}X^{\chi(2)}_{i_2,i_{\eta(2)};N}\cdots X^{\chi(n)}_{i_n,i_{\eta(n)};N})=\prod_{V\in \pi}E(\prod_{x\in V}X^{\chi(x)}_{i_x, i_{\eta(x)}}).\eqno (2.2)$$ By Property (1) in Definition 2.1, the equation $(2.2)=0$, if there is a block $V\in \pi$ containing only one number. Therefore, we assume $|V|\ge 2$, for all $V\in \pi$. Moreover, by Property (5) in Definition 2.1, we have
$$|E(X^{\chi(1)}_{i_1,i_{\eta(1)};N}X^{\chi(2)}_{i_2,i_{\eta(2)};N}\cdots X^{\chi(n)}_{i_n,i_{\eta(n)};N})|\le\prod_{V\in \pi}|E(\prod_{x\in V}X^{\chi(x)}_{i_x, i_{\eta(x)}})|=\prod_{V\in \pi}O(N^{-|V|/2})=O(N^{-n/2}),$$ as $N\rightarrow \infty$. We use the following notations adopted from \cite{KD}.
\begin{enumerate}
\item A partition $\pi\in \P(n)$ has property $P$, if $|V|\ge 2$, for all $V\in \pi$.
\item A partition $\pi\in \P(n)$ has property $P_3$ if $\pi$ has property $P$ and there is a block $V\in \pi$ such that $|V|\ge3$.
\item $P_2(n)=\{\pi\in P(n): |V|=2, \forall V\in \pi\}$.
\item A partition $\pi\in \P(n)$ has property $P_4$ if $\pi\in P_2(n)$, and there is a  block $\{x,y\}\in \pi$ such that $\mathcal{O}(x,y)=1$.
\item A partition $\pi\in \P(n)$ has property $P_5$ if $\pi\in P_2(n)$, and $\forall \{x,y\}\in \pi, \mathcal{O}(x,y)=-1$.
\end{enumerate}
An $(x,x)$-entry $\alpha(x,x;N)$ has the following estimate
\begin{align*}
|\alpha(x,x;N)|\le &\sum_{\pi\in \mathcal{P}(n): \pi \text{ has property }P}(\sum_{i_k=1,k=1, 2, \cdots, n, k\ne s_\chi(1), \text{giving } \pi}^N O(N^{-n/2}))\\
=&\sum_{\pi\in \mathcal{P}(n): \pi \text{ has property } P}O(N^{-n/2})\theta_{xx}(\pi),
\end{align*}
as $N\rightarrow \infty$, where $\theta_{xx}(\pi)$ is the number of choices of $i_1, i_2, \cdots, i_n$ except $i_{s_\chi(1)}$ which give $\pi$.
If a partition $\pi\in \P(n)$ has properties $P_3$ or $P_4$,  then by the argument in the proof of Lemma 2.2 in \cite{KD} (see Pages38-40 of the paper), $\theta_{xx}(\pi)\le N^{\frac{n-1}{2}}$. Note that this upper bound of $\theta_{xx}(\pi)$  is independent of $x$. It follows that for such a $\pi\in \mathcal{P}(n)$, $O(N^{-n/2})\theta_{xx}(\pi)\le GN^{-n/2}N^{(n-1)/2}\rightarrow 0$, as $N\rightarrow \infty$, where $G>0$ is a constant (independent of $x$ and $N$). Therefore,
\begin{align*}
&\lim_{N\rightarrow \infty}\sup_{1\le i_{s_\chi(1)}\le N}|\sum_{\pi\in \P(n) \text{ having property } P_3 \text{ or } P_4}\\
&(\sum_{i_k=1,k=1, 2, \cdots, n, k\ne s_\chi(1), \text{giving } \pi}^N E(X^{\chi(1)}_{i_1,i_{\eta(1)};N}X^{\chi(2)}_{i_2,i_{\eta(2)};N}\cdots X^{\chi(n)}_{i_n,i_{\eta(n)};N})|\\
\le & \lim_{N\rightarrow \infty}\sup_{1\le i_{s_\chi(1)}\le N}\sum_{\pi\in \P(n) \text{ having property } P_3 \text{ or } P_4}(\sum_{i_k=1,k=1, 2, \cdots, n, k\ne s_\chi(1), \text{giving } \pi}^N O(N^{-n/2})\\
= &\lim_{N\rightarrow \infty}\sup_{1\le i_{s_\chi(1)}\le N}\sum_{\pi\in \P(n) \text{ having property } P_3 \text{ or } P_4}O(N^{-n/2})\theta_{xx}(\pi)\\
\le & \lim_{N\rightarrow \infty}\sup_{1\le i_{s_\chi(1)}\le N}\sum_{\pi\in \P(n) \text{ having property } P_3 \text{ or } P_4} G N^{-1/2}\\
=&\lim_{N\rightarrow \infty}\sup_{1\le i_{s_\chi(1)}\le N}N^{-1/2}\varepsilon(n)=0,
\end{align*}
where $\varepsilon(n)$ is the number of partitions in $\P(n)$ which have properties $P_3$ or $P_4$.
 It implies that
\begin{align*}
&\alpha(x,x;N)\\
=&\sum_{\pi \in P_2(n) \text{ having Property } P_5}(\sum_{i_k, k=1,2,\cdots, n, k\ne s_\chi(1) \text{giving } \pi}E(X^{\chi(1)}_{i_1,i_{\eta(1)};N}X^{\chi(2)}_{i_2,i_{\eta(2)};N}\cdots X^{\chi(n)}_{i_n,i_{\eta(n)};N}))+O(\varepsilon)\\
=&\sum_{\pi \in P_2(n)\text{ having Property } P_5}(\sum_{i_k, k=1,2,\cdots, n, k\ne s_\chi(1) \text{giving } \pi}\prod_{\{x,y\}\in \pi}E(X^{\chi(x)}_{i_x,i_{\eta(x)}}X^{\chi(y)}_{i_y,i_{\eta(y)}}))+O(\varepsilon)\\
=&\sum_{i_k=1,  k=1,2,\cdots, n, k\ne s_\chi(1)}^N\sum_{\pi\in \P_2(n)}\frac{1}{N^{n/2}}\prod_{\{x,y\}\in\pi}\delta_{i_x, i_{\eta(y)}}\delta_{i_y, i_{\eta(x)}}c_{\chi(x), \chi(y)}+O(\varepsilon),
\end{align*}
where $\lim_{N\rightarrow\infty}O(\varepsilon) =0$.
Therefore, we have
\begin{align*}
\text{LHS of } (2.1)&=\lim_{N\rightarrow\infty}\frac{1}{N}\sum_{i_1, i_2, \cdots, i_n=1}^N\sum_{\pi\in\P_2(n)}\frac{1}{N^{n/2}}\prod_{\{x,y\}\in \pi}\delta_{i_x, i_{\eta(y)}}\delta_{i_y, i_{\eta(x)}}c_{\chi(x), \chi(y)}\\
=&\lim_{N\rightarrow\infty}\frac{1}{N}\sum_{j_1, j_2, \cdots, j_n=1}^N\sum_{\pi\in\P_2(n)}\frac{1}{N^{n/2}}\prod_{\{x,y\}\in \pi}\delta_{j_{s_\chi^{-1}(x)}, j_{s_\chi^{-1}(y)+1}}\delta_{j_{s_\chi^{-1}(y)}, j_{s_\chi^{-1}(x)+1}}c_{\chi(x), \chi(y)},
\end{align*}
where the last equality follows by replacing $i_x$ by $j_{s_\chi^{-1}(x)}$ (therefore, $i_{\eta(y)}=j_{s_\chi^{-1}(y)+1}$). We have got the same equation as that in the meddle in Page 10 of \cite{PS}. The rest of the present proof is same as the corresponding part of Theorem 4.10 in \cite{PS}.
\end{proof}

\begin{Remark} When $\chi:\{1, 2, \cdots, n\}\rightarrow I$, the above theorem gives a non-Gaussian random matrix model for a semicircular family of (possibly singular) covariance $C_I$( the restriction of the above positive definite matrix $C$ to $I\times I$) defined in Definition 8.15 in \cite{NS}. While Remark 4.12 in \cite{PS} provides a Gaussian random matrix model for such a semicircular family with a non-singular covariance matrix.
\end{Remark}
Theorem 4.11 in \cite{PS} gives a result on asymptotic bi-freeness of independent random pairs of matrices with Gaussian entries.  We can get a similar result for random  two-faced  families  of matrices with non-Gaussian entries.
\begin{Theorem}
Let $K$ be an index set. Let $C_k=(c_{i,j;k})_{I\cup J\times I\cup J}$ be a positive definite matrix with real entries for $k\in K$, and let $\{((s_{i,k})_{i\in I}, (s_{j,k})_{j\in J}):k\in K\}$ be a bi-free collection of bi-free central limit distributions of self-adjoint random variables in a  $*$-probability space $(\A,\varphi)$ with covariance matrix $C_k$, for $k\in K$. For each $N\in \mathbb{N}$ and $k\in K$, let $((X^{i,k}_N)_{i\in I}, (X^{j,k}_N)_{j\in J})$ be a self-adjoint $C_k$-random two-faced family of $N\times N$ matrices. Moreover, suppose that
 $$ \{\{X^{l,k}_{i,j;N}:l\in I\cup J, k\in K\}: 1\le i\le j\le N\}\eqno (2.3)$$ is an independent family of  sets of random variables in $L^{\infty-}(\Omega,\mu)$, for an $m\in \mathbb{N}$ and  functions $$\chi: \{1, 2, \cdots, m\}\rightarrow I\cup J, \ \epsilon:\{1, 2, \cdots, m\}\rightarrow K,$$ there exist a positive constant number $M(\chi, \epsilon, m)$ and $N_0\in \mathbb{N}$ such that $$\sup_{1\le i\le j\le N}|E(X^{\chi(1),\epsilon(1)}_{\lambda(1),\tau(1);N}X^{\chi(2), \epsilon(2)}_{\lambda(2),\tau(2) ;N}\cdots X_{\lambda(m),\tau(m);N}^{\chi(m), \epsilon(m)})|\le M(\chi, \epsilon,m)N^{-m/2},\eqno (2.4)$$ whenever  $N>N_0$, where $(\lambda(l),\tau(l))=(i,j)$, or $(\lambda(l), \tau(l))=(j,i)$, for $l=1, 2, \cdots, m$,  and $$E(X_{i,j;K}^{l_1, k_1}X_{j,i;N}^{l_2, k_2})=0, \forall k_1\ne k_2, i\in I, i\in I, j\in J, k_1, k_2 \in K.\eqno (2.5)$$
 Then, for every $n\in \mathbb{N}$ and functions $\chi: \{1, 2, \cdots, n\}\rightarrow I\cup J, \epsilon:\{1,2, \cdots, n\}\rightarrow K$, we have
$$\lim_{N\rightarrow \infty}\frac{1}{N}Tr(E_{\L(\mathcal{X}_N)}(X_N^{\chi(1),\epsilon(1)} X_N^{\chi(2), \epsilon(2)}\cdots X^{\chi(n), \epsilon(n)}_N))=\varphi(s_{\chi(1), \epsilon(1)}s_{\chi(2),\epsilon(2)}\cdots s_{\chi(n),\epsilon(n)}).\eqno(2.6)$$
\end{Theorem}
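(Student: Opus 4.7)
The plan is to mirror the proof of Theorem~2.2, carrying the extra color index $k\in K$ through the entire calculation and invoking the new hypotheses (2.3)--(2.5) at the appropriate steps. First, expanding the trace and using Lemma~3.7 and Remark~3.8 of \cite{PS} verbatim, I would write
\[
\frac{1}{N}Tr\bigl(E_{\L(\X_N)}(X_N^{\chi(1),\epsilon(1)}\cdots X_N^{\chi(n),\epsilon(n)})\bigr)=\frac{1}{N}\sum_{i_{s_\chi(1)}=1}^N \alpha(i_{s_\chi(1)},i_{s_\chi(1)};N),
\]
where each $\alpha(x,x;N)$ is a sum of expectations $E\bigl(\prod_{t=1}^n X^{\chi(t),\epsilon(t)}_{i_t,i_{\eta(t)};N}\bigr)$ with $\eta(t)=s_\chi(s_\chi^{-1}(t)+1)$. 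The only structural difference from Theorem~2.2 is the presence of the second label $\epsilon(t)$ on each entry.

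Second, invoking the independence assumption (2.3), each such expectation factors along the partition $\pi\in\P(n)$ induced by the $(i_t,i_{\eta(t)})$-pairs exactly as in the proof of Theorem~2.2, and the orientation function $\mathcal{O}$ is defined in the same way. The uniform bound (2.4) plays the role that Property (5) of Definition~2.1 played before, giving the estimate $O(N^{-n/2})$ for each surviving summand. The Dykema--Kemp counting argument (Lemma~2.2 of \cite{KD}) still yields $\theta_{xx}(\pi)\le N^{(n-1)/2}$ whenever $\pi$ has property $P_3$ or $P_4$, so these partitions contribute only $O(N^{-1/2})$ to $\alpha(x,x;N)/N$ uniformly in $x$. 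Consequently only pair partitions $\pi\in P_2(n)$ with property $P_5$ (all edges oriented $-1$) survive in the $N\to\infty$ limit.

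The genuinely new ingredient is hypothesis (2.5). For a surviving pair $\{x,y\}\in\pi$ with $\mathcal{O}(x,y)=-1$, the factored expectation is $E(X^{\chi(x),\epsilon(x)}_{i_x,i_{\eta(x)};N}X^{\chi(y),\epsilon(y)}_{i_y,i_{\eta(y)};N})$, which vanishes whenever $\epsilon(x)\ne \epsilon(y)$. The surviving sum therefore runs only over pair partitions whose blocks are monochromatic with respect to $\epsilon$, and each such pair contributes $\frac{1}{N}c_{\chi(x),\chi(y);\epsilon(x)}$ by Property (4) of Definition~2.1. After substituting $i_x=j_{s_\chi^{-1}(x)}$ (so that $i_{\eta(y)}=j_{s_\chi^{-1}(y)+1}$) and collecting the Kronecker deltas, I would arrive at exactly the sum printed in the middle of page~10 of \cite{PS}, now further restricted to $\epsilon$-monochromatic pair partitions.

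Finally, I would match this expression with $\varphi(s_{\chi(1),\epsilon(1)}\cdots s_{\chi(n),\epsilon(n)})$. Since $\{((s_{i,k})_{i\in I},(s_{j,k})_{j\in J}):k\in K\}$ is a bi-free collection of bi-free central limit distributions, the characterization (1.3) forces every bi-free cumulant that is not $\epsilon$-monochromatic to vanish, and within a single color only the second-order cumulants $c_{\cdot,\cdot;k}$ survive. The moment-cumulant relation (1.2) then rewrites $\varphi(s_{\chi(1),\epsilon(1)}\cdots s_{\chi(n),\epsilon(n)})$ as a sum over bi-non-crossing pair partitions of $\{1,\ldots,n\}$ refining $\ker\epsilon$, weighted by the same products of covariances. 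The main obstacle, and the point that requires the most care, is the bookkeeping: one must verify that the monochromaticity condition produced by (2.5) matches precisely the refinement condition forced by bi-freeness across $K$, and that the combination of the permutation $s_\chi$ with the orientation constraint $P_5$ cuts the surviving partitions down to $BNC(n,\chi)$ rather than to all of $P_2(n)$. Once this identification is established, the remainder follows exactly as in the closing paragraph of the proof of Theorem~4.11 in \cite{PS}.
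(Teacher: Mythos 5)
Your proposal follows essentially the same route as the paper's own proof: factor via the independence hypothesis (2.3), bound via (2.4), kill non-monochromatic pairs via (2.5), discard $P_3$/$P_4$ partitions by Dykema's counting bound, reduce to $\epsilon$-monochromatic $P_5$ pair partitions, convert to bi-non-crossing pair partitions via the end of Skoufranis's Proposition~4.10, and match the moment using vanishing of mixed and higher-order bi-free cumulants. The final ``bookkeeping'' step you flag is exactly what the paper handles by citing the last part of the proof of Proposition~4.10 in \cite{PS}, so no genuine gap remains.
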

\begin{proof}
We use ideas and notations in the proof of Theorem 2.2. But in this case, we need consider one more factor, the index function $\epsilon: \{1, 2, \cdots, n\}\rightarrow K$.  We will use the properties (1)-(5) in Definition 2.1 and (2.3)-(2.5) in this theorem to estimate  $$E(X^{\chi(1), \epsilon(1)}_{i_1,i_{\eta(1)};N}X^{\chi(2), \epsilon(2)}_{i_2,i_{\eta(2)};N}\cdots X^{\chi(n), \epsilon(n)}_{i_n,i_{\eta(n)};N}),$$ a summand of the $(i_{s_\chi(1)}, i_{s_\chi(1)})$-entry of matrix $E_{\L(\mathcal{X}_N)}(X^{\chi(1), \epsilon(1)}_N\cdots X^{\chi(n), \epsilon(n)}_N)$.

 For $\{i_1, i_2, \cdots, i_n\}\subseteq \{1, 2, \cdots, N\}$, we define a partition $\pi$ according to the conditions $(I)$ and $(II)$ in the proof of Theorem 2.2. By Condition (2.3) of this theorem, we get an equation similar to $(2.2)$
$$E(X^{\chi(1),\epsilon(1)}_{i_1,i_{\eta(1)};N}X^{\chi(2),\epsilon(2)}_{i_2,i_{\eta(2)};N}\cdots X^{\chi(n),\epsilon(n)}_{i_n,i_{\eta(n)};N})=\prod_{V\in \pi}E(\prod_{x\in V}X^{\chi(x),\epsilon(x)}_{i_x, i_{\eta(x)}}).$$
Moreover, by  (2.4), we have
$$|E(X^{\chi(1),\epsilon(1)}_{i_1,i_{\eta(1)};N}X^{\chi(2), \epsilon(2)}_{i_2,i_{\eta(2)};N}\cdots X^{\chi(n), \epsilon(n)}_{i_n,i_{\eta(n)};N})|\le\prod_{V\in \pi}|E(\prod_{x\in V}X^{\chi(x), \epsilon(n)}_{i_x, i_{\eta(x)}})|=\prod_{V\in \pi}O(N^{-|V|/2})=O(N^{-n/2}),$$ as $N\rightarrow \infty$.
By $(2.5)$ and the same arguments as those after $(2.2)$, we get  \begin{align*}
&\text{LHS } of (2.6) \\
=&\lim_{N\rightarrow \infty}\frac{1}{N}\sum_{s_\chi(1)}^N\sum_{\pi \in P_2(n) \text{ having Property } P_5}(\sum_{i_k, k=1,2,\cdots, n, k\ne s_\chi(1) \text{ giving } \pi}E(X^{\chi(1),\epsilon(1)}_{i_1,i_{\eta(1)};N}X^{\chi(2), \epsilon(2)}_{i_2,i_{\eta(2)};N}\cdots X^{\chi(n),\epsilon(n)}_{i_n,i_{\eta(n)};N}))\\
=&\lim_{N\rightarrow \infty}\frac{1}{N}\sum_{s_\chi(1)}^N\sum_{\pi \in P_2(n)\text{ having Property } P_5}(\sum_{i_k, k=1,2,\cdots, n, k\ne s_\chi(1) \text{ giving } \pi}\prod_{\{x,y\}\in \pi}E(X^{\chi(x),\epsilon(x)}_{i_x,i_{\eta(x)}}X^{\chi(y), \epsilon(y)}_{i_y,i_{\eta(y)}})\\
=&\lim_{N\rightarrow \infty}\frac{1}{N}\sum_{s_\chi(1)}^N\sum_{\pi \in P_2(n)\text{ having Property } P_5}(\sum_{i_k, k=1,2,\cdots, n, k\ne s_\chi(1) \text{ giving } \pi}\prod_{\{x,y\}\in \pi}\frac{1}{N}c_{\chi(x), \chi(y);\epsilon(x)}\delta_{\epsilon(x), \epsilon(y)}).
\end{align*}
For the given function $\epsilon:\{1, 2, \cdots, n\}\rightarrow K$, define a partition $\pi_\epsilon\in \P(n)$ by the rule that two numbers $x,y$ are in the same block of $\pi_\epsilon$ if and only if $\epsilon(x)=\epsilon(y)$. For a partition $\pi\in \P(n)$, we say that $\pi\preceq \pi_\epsilon$ if every block of $\pi_\epsilon$ is the union of some blocks of $\pi$. We then have
$$\text{LHS } of (2.6)=\lim_{N\rightarrow\infty}\frac{1}{N}\sum_{i_1, i_2, \cdots, i_n=1}^N\sum_{\pi\in\P_2(n), \pi\preceq \pi_\epsilon}\frac{1}{N^{n/2}}\prod_{\{x,y\}\in \pi}\delta_{i_x, i_{\eta(y)}}\delta_{i_y, i_{\eta(x)}}c_{\chi(x), \chi(y);\epsilon(x)}.\eqno (2.7)$$ By the last part of the proof of Proposition 4.10 in \cite{PS} and $(2.7)$, we have
\begin{align*}
\text{LHS } of (2.6)=&\sum_{\pi\in BNC_2(n,\chi), \pi\preceq\pi_\epsilon}\prod_{\{x,y\}\in \pi}c_{\chi(x), \chi(y),\epsilon(x)}\\
=&\sum_{\pi\in BNC_2(n,\chi), \pi\preceq\pi_\epsilon}\kappa_{\pi,\chi}(s_{\chi(1), \epsilon(1)}, s_{\chi(2), \epsilon(2)}, \cdots, s_{\chi(n), \epsilon(n)})\\
=&\sum_{\pi\in BNC_2(n,\chi)}\kappa_{\pi,\chi}(s_{\chi(1), \epsilon(1)}, s_{\chi(2), \epsilon(2)}, \cdots, s_{\chi(n), \epsilon(n)})\\
=&\sum_{\pi\in BNC(n, \chi)}\kappa_{\pi,\chi}(s_{\chi(1), \epsilon(1)}, s_{\chi(2), \epsilon(2)}, \cdots, s_{\chi(n), \epsilon(n)})\\
=&\varphi(s_{\chi(1), \epsilon(1)}, s_{\chi(2), \epsilon(2)}, \cdots, s_{\chi(n), \epsilon(n)}),
\end{align*}
where the second equality follows by the definition of bi-free cumulants, the third equality holds because of the bi-freeness of  $((s_{i,k})_{i\in I}, (s_{j,k})_{j\in J})$, for different $k\in K$, and the fourth equality holds by the definition of bi-free central limit distributions.
\end{proof}
\begin{Remark}
\begin{enumerate}
\item If $\{\{X_{i,j;N}^{l, k}:l\in I\cup J, 1\le i\le j\le N\}:k\in K\}$ is an independent family of sets of random variables, then it is obvious that $(2.3)$ and $(2.5)$ hold. Moreover, \begin{align*}&\sup_{1\le i\le j\le N}|E(X^{\chi(1),\epsilon(1)}_{\lambda(1),\tau(1);N}X^{\chi(2), \epsilon(2)}_{\lambda(2),\tau(2) ;N}\cdots X_{\lambda(m),\tau(m);N}^{\chi(m), \epsilon(m)})|\\
\le &\sup_{1\le i\le j\le N}\prod_{V\in \pi_\epsilon}|E(X^{\chi(1),\epsilon(V)}_{\lambda(1),\tau(1);N}X^{\chi(2), \epsilon(V)}_{\lambda(2),\tau(2) ;N}\cdots X_{\lambda(m),\tau(m);N}^{\chi(m), \epsilon(V)})| \\
\le &(\prod_{V\in \pi_\epsilon}M(\chi, \epsilon(V),m))N^{-m/2},
\end{align*}
where $\epsilon(V)$ is the common value of $\epsilon$ when restricted to $V\in \pi_\epsilon$. It implies that the independence of $\{\{X_{i,j;N}^{l, k}:l\in I\cup J, 1\le i\le j\le N\}:k\in K\}$ implies the three conditions $(2.3), (2.4)$, and $(2.5)$.
\item Theorem 4.11 in \cite{PS} states that independent self-adjoint Gaussian random pairs of matrices are asymptotic bi-free with the limit of a bi-free family of bi-free central limit distributions. The above theorem gives the same asymptotic freeness result for  a family of random two-faced families of matrices  under  conditions $(2.3)$, $(2.4)$, and $(2.4)$, which are weaker than the independence of $\{\{X_{i,j;N}^{l, k}:l\in I\cup J, 1\le i\le j\le N\}:k\in K\}$  by (1) in this remark.
\end{enumerate}
\end{Remark}

\begin{Theorem}Fix an index set $K$. Let $C_k=(c_{i,j;k})_{I\cup J\times I\cup J}$ be a positive definite matrix with real entries for $k\in K$. For each $N\in \mathbb{N}$ and $k\in K$, let $((Y^{i,k}_N)_{i\in I}, (Y^{j,k}_N)_{j\in J})$ be  a family of $N\times N$ matrices which satisfy the conditions (1)-(5) in Definition 2.1 and  (2.3)-(2.5). Let
$D_{k',N}=\sum_{i=1}^Nd_{i, N}^{k'}E(i, i;N), k'\in K'$, be diagonal constant matrices with the joint distribution converging to the distribution of $d_{k'}: k'\in K'$, as $N\rightarrow \infty$, where $d_{k'}, k'\in K'$, are elements in a non-commutative probability space $(\A, \varphi)$. Assume that $$\sup_{1\le i\le N, N=1, 2, \cdots}\{|d_{i,N}^{k'}|\}=M(k')<\infty,\forall k'\in K'.\eqno (2.8)$$  Then  $\{Y_N^{i,k}, Y_N^{j,k}, D_{k',N}: i\in I, j\in J, k\in K, k'\in K'\}$ converges in distribution
to that of $\{s_{i,k}, s_{j,k}, d_{k'}:i\in I, j\in J, k\in K,  k'\in K'\}$, as $N\rightarrow \infty$,  where $\{\{(s_{i,k})_{i\in I}, (s_{j,k})_{j\in J}\},k\in K,  \{d_{k'}: k'\in K'\} \}$ is a free family.
\end{Theorem}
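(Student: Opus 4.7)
The strategy is a joint moment calculation extending the proof of Theorem 2.4. By multilinearity it suffices to show that for every word $T_1T_2\cdots T_n$ in which each $T_l$ is either a random matrix $Y_N^{\chi(l),\epsilon(l)}$ (with $\chi(l)\in I\cup J$, $\epsilon(l)\in K$) or a constant diagonal matrix $D_{\kappa(l),N}$ (with $\kappa(l)\in K'$),
$$\lim_{N\to\infty}\frac{1}{N}Tr\bigl(E_{\L(\X_N)}(T_1T_2\cdots T_n)\bigr)=\varphi\bigl(\widetilde T_1\widetilde T_2\cdots\widetilde T_n\bigr),$$
where $\widetilde T_l$ is $s_{\chi(l),\epsilon(l)}$ or $d_{\kappa(l)}$ accordingly, evaluated in the free product described in the statement. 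Let $P_Y$ and $P_D$ denote the sets of $Y$- and $D$-positions.

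Expand the trace in matrix indices. Each $D_{\kappa(l),N}$ is diagonal, so it contributes a scalar factor $d^{\kappa(l)}_{i,N}$ and forces the two neighboring indices in the expansion to be equal, while the $Y$-factors contribute random matrix entries whose joint expectations are controlled by (2.3)--(2.5). Apply the argument of Theorem 2.4, restricted to the positions in $P_Y$: define a partition $\pi$ of $P_Y$ by the relations (I) and (II), use (2.3) to factorize the joint expectation of the $Y$-entries over the blocks of $\pi$, and bound each block of size $|V|$ by $O(N^{-|V|/2})$ via (2.4). The uniform bound (2.8) implies that each $D$-factor is bounded by a constant, so the $D$'s do not affect the order-of-magnitude analysis that isolates the dominant partitions. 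The counting argument used in Theorems 2.2 and 2.4 (properties $P_3$, $P_4$, and the enumeration bound $\theta_{xx}(\pi)\le N^{(n-1)/2}$ borrowed from \cite{KD}) then shows that only pair partitions $\pi\in P_2(P_Y)$ with property $P_5$ and $\pi\preceq\pi_\epsilon|_{P_Y}$ survive in the limit; all other partitions yield $O(N^{-1/2})$.

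For a surviving pair partition $\pi$, the constraints $i_x=i_{\eta(y)}$, $i_y=i_{\eta(x)}$ imposed by property $P_5$, together with the index-equalities forced by the diagonal $D$-factors, organise the summation indices into disjoint cycles. Summing the $D$-factors along a cycle $c$ and normalising by the appropriate power of $N$ yields in the limit $\varphi\bigl(\prod_{l\in c}d_{\kappa(l)}\bigr)$, by the assumed convergence of the joint distribution of $\{D_{k',N}:k'\in K'\}$. Combined with the $Y$-contributions (as in Theorem 2.4), this gives
$$\lim_{N\to\infty}\frac{1}{N}Tr\bigl(E_{\L(\X_N)}(T_1\cdots T_n)\bigr)=\!\!\!\sum_{\substack{\pi\in BNC_2(P_Y,\chi|_{P_Y})\\ \pi\preceq\pi_\epsilon|_{P_Y}}}\!\!\!\prod_{\{x,y\}\in\pi}c_{\chi(x),\chi(y);\epsilon(x)}\prod_{c\in\mathrm{cycles}(\pi)}\varphi\bigl(\prod_{l\in c}d_{\kappa(l)}\bigr).$$

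Finally, following the last paragraph of the proof of Theorem 2.4, the sum over $\pi$ recovers, for each fixed partition of $P_Y$ induced by $\pi_\epsilon$, the bi-free cumulant expansion of the corresponding $Y$-subword in the bi-free central limit distribution indexed by $k\in K$, while the cycle-products of $\varphi(d_{\kappa}\cdots d_{\kappa})$ give the moments in the commutative algebra generated by $\{d_{k'}\}$. The resulting expression is exactly the alternating-moment formula for the free product in which the bi-free two-faced families $((s_{i,k})_{i\in I},(s_{j,k})_{j\in J})$, $k\in K$, are mutually bi-free and are jointly free from $\{d_{k'}:k'\in K'\}$. The main obstacle is this last combinatorial identification: one must check that the cycles produced by the pair-partition/diagonal constraints are arranged non-crossingly with respect to $\pi$, so that the product of $Y$-pair covariances and $D$-cycle moments reproduces the non-crossing alternating moment formula that characterises freeness. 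Once this is verified, the asserted asymptotic freeness follows directly.
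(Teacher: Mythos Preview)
Your overall strategy coincides with the paper's: expand the trace, partition the $Y$-positions via relations (I)/(II), discard $P_3$ and $P_4$ using the bound (2.4) together with (2.8), and reduce to pair partitions with property $P_5$ refined by $\pi_\epsilon$. Two points distinguish the paper's argument from yours, and one of them is a genuine gap.

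First, a simplification you miss: the paper immediately reduces to \emph{alternating} words $Y D Y D\cdots Y D$ by assuming without loss of generality that $\{D_{k',N}\}$ is closed under multiplication (insert identities and merge adjacent $D$'s). Then the cyclic permutation $\gamma(l)=l+1$ on $\{1,\dots,n\}$ governs the index pattern, rather than the bi-free permutation $\eta=s_\chi(s_\chi^{-1}(\cdot)+1)$ you invoke. This is a free (not bi-free) statement, so the surviving partitions lie in $NC_2(n)$, not $BNC_2(P_Y,\chi|_{P_Y})$; your bi-free notation is not wrong per se but obscures the setting and makes the final step harder to execute.

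Second, and more importantly, you explicitly leave open ``the main obstacle'': identifying the cycle products with the alternating-moment formula for freeness. This is the heart of the proof, and the paper carries it out concretely. For $\pi\in P_2(n)$ with property $P_5$, the constraints $i(\gamma\widetilde\pi(l))=i(l)$ allow the index sum to be rewritten, via Lemma~22.31 of \cite{NS}, as $N^{\sharp(\gamma\widetilde\pi)}\,tr_{\widetilde\pi\gamma}(D_{\beta(1),N},\dots,D_{\beta(n),N})$. The exponent $\sharp(\gamma\widetilde\pi)-1-n/2$ is $\le 0$ with equality iff $\pi\in NC_2(n)$ (Exercise~22.15 in \cite{NS}); this is the missing power-count that kills crossing pairings. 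For $\pi\in NC_2(n)$ one then has $\widetilde\pi\gamma=K(\pi)$, the Kreweras complement, so the $D$-contribution converges to $\varphi_{K(\pi)}(d_{\beta(1)},\dots,d_{\beta(n)})$. Matching the resulting sum $\sum_{\pi\in NC_2\cap P_2^\alpha}\bigl(\prod c_{\chi(p),\chi(q);\alpha(p)}\bigr)\varphi_{K(\pi)}(\cdots)$ with the target moment is then immediate from Theorem~14.4 of \cite{NS} applied to the free family $\{s_{\chi(l),\alpha(l)}\}\cup\{d_{\beta(l)}\}$, since the semicircular cumulants force $\pi\in NC_2$ and bi-freeness of the $s$'s over $k\in K$ forces $\pi\preceq\pi_\alpha$. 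Without these three ingredients---the Lemma~22.31 rewrite, the $\sharp(\gamma\pi)$ characterisation of non-crossing pairings, and the Kreweras identification---your proposal stops short of a proof.
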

\begin{proof}
Without loss of generality, we assume that $ \{D_{k',K}:k'\in K\}$ forms a multiplicative semigroup, for $N\in \mathbb{N}$.   In order to prove the statement on convergence in distributions, it is sufficient to prove that for any $n\ge 2, n\in \mathbb{N}$, and functions
$$\chi:\{1, 2, \cdots, n\}\rightarrow I\cup J,\  \alpha:\{1, 2, \cdots, n\}\rightarrow K,\  \beta:\{1, 2, \cdots, n\}\rightarrow K',$$ we have
$$\lim_{N\rightarrow \infty}\varphi_N(Y^{\chi(1), \alpha(1)}_{N}D_{\beta(1), N}Y^{\chi(2), \alpha(2)}_{N}D_{\beta(2), N}\cdots Y^{\chi(n), \alpha(n)}_{N}D_{\beta(n), N})$$
$$=\varphi(s_{\chi(1), \alpha(1)}d_{\beta(1)}\cdots s_{\chi(n),\alpha(n)}d_{\beta(n)}), \eqno (2.9)$$
where  $\varphi_N(\cdot)=\frac{1}{N}Tr(E_{\L_{\mathcal{X}_N}}(\cdot))$.

Let $Y^{\chi(l), \alpha(l)}_{N}=[x_{i,j;N}^{\chi(l), \alpha(l)}]_{N\times N}$ and $D_{\beta(l), N}=[d_{i,N}^{\beta(l)}]_{N\times N}$, where $l=1, 2, \cdots, n$. Let $$ \gamma(i)=i=1, i=1, 2, \cdots, n-1, \gamma(n)=1.$$ We use $LHS$ to denote the expression  in the left-hand side of $(2.6)$ before taking limit, we then have
\begin{align*}
LHS
=&\frac{1}{N}\sum_{i(1), \cdots, i(n)
=1}^NE( \prod_{l=1}^nx_{i(l), i(\gamma(l)); N}^{\chi(l), \alpha(l)}d_{i(\gamma(l)), N}^{\beta(l)})\\
=&\frac{1}{N}\sum_{i(1), \cdots, i(n)=1}^NE(\prod_{l=1}^n x_{i(l), i(\gamma(l)); N}^{\chi(l), \alpha(l)})
\prod_{l=1}^nd_{i(\gamma(1)), N}^{\beta(l)}.
\end{align*}
We use the methods in the proofs of Theorem 2.2 (see also Lemma 2.2 in \cite{KD}) to estimate the non-zero contributions of entries $E(\prod_{l=1}^nx_{i(l), i(\gamma(l)); N}^{\chi(l), \alpha(l)})$ to   $\lim_{N\rightarrow \infty}LHS$. For given $i(l), l=1, 2, \cdots, n$, we define a partition $\pi$ of $\{1, 2, \cdots, n\}$ as follows. Two numbers $p$ and $q$ are in the same block of $\pi$ if and only if
\begin{enumerate}
\item $i(p)=i(q)$, $i(\gamma(p))=i(\gamma(q))$,
or,
\item $i(p)=i(\gamma(q))$, $i(\gamma(p))=i(q)$.
\end{enumerate}
We can define an orientation function $\mathcal{O}:\{(p,q): p,q\in V, V\in \pi \}\rightarrow \{-1, 1\}$, $\mathcal{O}(p,q)=1$ if (1) holds; $\mathcal{O}(p,q)=-1$, if (2) holds.
By (2.3), we have $$E(\prod_{l=1}^nx_{i(l), i(\gamma(l)); N}^{\chi(l), \alpha(l)})=\prod_{V\in \pi}E(\prod_{p\in V}x_{i(p), i(\gamma(p));N}^{\chi(p), \alpha(p)}).$$ If $\pi$ contains a single block (i.e., a block contains only one element), then by condition (1) in Definition 2.1, $E(\prod_{l=1}^nx_{i(l), i(\gamma(l)); N}^{\chi(l), \alpha(l)})=0$. Thus, we assume  $|V|\ge 2$, for every $V\in \pi$. By (2.4), for the given functions $\chi$ and $\alpha$, and $n\in \mathbb{N}$, there is an $N_0\in \mathbb{N}$ such that $$|E(\prod_{l=1}^nx_{i(l), i(\gamma(l)); N}^{\chi(l), \alpha(l)})|\le \prod_{V\in \pi}|E(\prod_{p\in V}x_{i(p), i(\gamma(p));N}^{\chi(p), \alpha(p)})|\le \prod_{V\in \pi}M(\chi,\alpha|_V,  n)N^{-n/2}, \forall N\ge N_0.$$ Combining this inequality with (2.8), we get
$$LHS\le \frac{1}{N}\sum_{\pi \text{ has Property } P}\sum_{1\le i(1), \cdots, i(n),  \text{giving } \pi}\prod_{V\in \pi}M(\chi,\alpha|_V, n)\prod_{i=1}^nM(\beta(i))N^{-n/2},$$
$\forall N\ge N_0. $
Let $\theta(\pi)$ be the number of all possible choices of $i(1), \cdots, i(n)$ from $\{1, 2, \cdots, N\}$, which give the partition $\pi$. We then have $$LHS\le \frac{1}{N} \sum_{\pi\text{ has Property } P}\prod_{V\in \pi}M(\chi,\alpha|_V, n)\prod_{i=1}^nM(\beta(i))N^{-n/2}\theta(\pi),\eqno (2.10)$$ for $N\ge N_0$.

Let $P=\{\pi: \pi \text{ is a partition of } \{1, 2, \cdots, n\}, |V|\ge 2, \forall V\in \pi\}$. As in the proof of Theorem 2.2, we can divide the Set $P$ into three parts, $P=P_3\cup P_4\cup P_5$ (see the proof of Theorem 2.2 for definitions of $P_3, P_4$, and $P_5$).  By the proof of Theorem 2.4, we have
$$\lim_{N\rightarrow \infty}\frac{1}{N}\sum_{\pi\in P_3\cup P_4}\sum_{1\le i(1), \cdots, i(n)\le N,  \text{ giving } \pi}|E(\prod_{l=1}^nx_{i(l), i(\gamma(l)); N}^{\chi(l), \alpha(l)}d_{i(\gamma(l)), N}^{\beta(l)})|=0.$$

For $\pi\in P_5$, $\pi=\{V_1, \cdots, V_{n/2}\}$, and $|V_i|=2$, for $p,q\in V_i$, we have $$i(p)=i(\gamma(q))=i(\gamma(\widetilde{\pi}(p))), i(\gamma(\widetilde{\pi}(q)))=i(q),$$ for $i=1, 2, \cdots, n/2$, where $\widetilde{\pi}\in S_n$ is the permutation associated with $\pi$ defined by $\widetilde{\pi}(p)=q, \widetilde{\pi}(q)=p$, for $\{p,q\}\in \pi$. We thus have $i(\gamma(\widetilde{\pi}(l)))=i(l)$, for all $l=1, 2, \cdots, n$. In this case, $n$ must be even. It follows that
\begin{align*}
&\lim_{N\rightarrow \infty}LHS\\
=&\lim_{N\rightarrow \infty}\frac{1}{N}\sum_{\pi\in P_5}\sum_{1\le i(1), \cdots, i(n)\le N, i(1), \cdots, i(n) \text{ give } \pi}\prod_{\{p,q\}\in \pi}E(x_{i(p), i(\gamma(p));N}^{\chi(p), \alpha(p)}x_{i(q), i(\gamma(q)); N}^{\chi(q), \alpha(q)})\prod_{l=1}^nd_{i(\gamma(1)), N}^{\beta(l)}\\
=&\lim_{N\rightarrow \infty}\frac{1}{N}\sum_{\pi\in P_5}\prod_{\{p,q\}\in \pi}c_{\chi(p), \chi(q);\alpha(p)}\delta_{\alpha(p), \alpha(q)}\\
\times &(\sum_{1\le i(1), \cdots, i(n)\le N}\prod_{\{p,q\}\in \pi}\delta_{i(p+1),i(q)}\delta_{i(q+1),i(p)} N^{-n/2}\prod_{l=1}^nd_{i(\gamma(l)), N}^{\beta(l)})\\
=&\lim_{N\rightarrow \infty}\frac{1}{N}\sum_{\pi\in P_5, \pi\preceq\pi_\alpha}(\prod_{\{p,q\}\in \pi}c_{\chi(p), \chi(q); \alpha(p)}\sum_{1\le i(1), \cdots, i(n)\le N}\prod_{l=1 }^n\delta_{i(\gamma(\widetilde{\pi}(l))),i(l)} N^{-n/2}\prod_{l=1}^nd_{i(\gamma(l)), N}^{\beta(l)}\\
\end{align*}
Let $P_2$ be the set of all pairing partitions of $\{1, 2, \cdots, n\}$. For every $\pi\in P_2$, let $$\pi=\{\{p_1, q_1\}, \cdots \{p_{n/2}, q_{n/2}\}\}.$$ Choose $1\le i(p_l)=i(\gamma(q_l)), i(\gamma(p_l))=i(q_l)\le N$, and $l=1, 2, \cdots, n/2$. Then $\pi\in P_5$. Let $P_2^\alpha=\{\pi: \pi\in P_2, \pi\preceq\pi_\alpha\}$.
We thus get
$$\lim_{N\rightarrow \infty}LHS=\lim_{N\rightarrow \infty}\frac{1}{N}\sum_{\pi\in P_2^\alpha}\prod_{\{p,q\}\in \pi}c_{\chi(p), \chi(q);\alpha(p)}(\sum_{1\le i(1), \cdots, i(n)\le N}\prod_{l=1 }^n\delta_{i(\gamma(\widetilde{\pi}(l))),i(l)} N^{-n/2}\prod_{l=1}^nd_{i(\gamma(l)), N}^{\beta(l)}).$$
Let $\sharp(\zeta)$ be the number of cycles in the permutation $\zeta=\gamma\widetilde{\pi}$. By Lemma 22.31 in \cite{NS},  $$\sum_{1\le i(1), \cdots, i(n)\le N}\prod_{l=1 }^n\delta_{i(\gamma(\widetilde{\pi}(l))),i(l)} \prod_{l=1}^nd_{i(\gamma(l)), N}^{\beta(l)}=N^{\sharp(\zeta)}tr_{\widetilde{\pi}\gamma}(D_{\beta(1), N}\cdots D_{\beta(n), N}),$$ where  $tr:M_N(\mathbb{C})\rightarrow \mathbb{C}$ is the normalized trace. It follows that
\begin{align*}\lim_{N\rightarrow \infty}LHS=&\lim_{N\rightarrow \infty}\sum_{\pi\in P_2^\alpha}(\prod_{\{p,q\}\in \pi}c_{\chi(p), \chi(q);\alpha(p)})tr_{\widetilde{\pi}\gamma}(D_{\beta(1), N}\cdots D_{\beta(n), N})N^{\sharp(\zeta)-1-n/2}\\
=&\sum_{\pi\in P_2^\alpha}(\prod_{\{p,q\}\in \pi}c_{\chi(p), \chi(q);\alpha(p)})\lim_{N\rightarrow\infty}tr_{\widetilde{\pi}\gamma}(D_{\beta(1), N}\cdots D_{\beta(n), N})\lim_{N\rightarrow \infty}N^{\sharp(\zeta)-1-n/2}\\
=&\sum_{\pi\in P_2^\alpha\cap NC(n)}(\prod_{\{p,q\}\in \pi}c^{\alpha(p)}_{\chi(p), \chi(q)})\varphi_{\widetilde{\pi}\gamma}(d_{\beta(1)}\cdots d_{\beta(n)}),
\end{align*}
where the last equality comes from the fact that for $\pi\in P_2(n)$, $\sharp(\gamma\pi)\le 1+\frac{n}{2}$, and $\sharp(\gamma\pi)=1+\frac{n}{2}$ if and only if $\pi$ is non-crossing (Exercise 22.15 in \cite{NS}). The other fact we used here is that $\{D_{k',N}: k'\in K'\}$ converges in distribution to $\{d_{k'}: k'\in K'\}$, as $N\rightarrow \infty$.

On the other hand, since $((s_{i, k})_{i\in I}, (s_{j,k})_{j\in J}), k\in K$, and  $\{d_{k'}:k'\in K'\}$ are free,  by the moment formula of products of free random variables (Theorem 14.4 in \cite{NS}), we have
\begin{align*}
&\varphi(s_{\chi(1), \alpha(1)}d_{\beta(1)}\cdots s_{\chi(n),\alpha(n)}d_{\beta(n)})\\
=&\sum_{\pi\in NC(n)}\kappa_\pi(s_{\chi(1), \alpha(1)}, \cdots, s_{\chi(n), \alpha(n)})\varphi_{K(\pi)}(d_{\beta(1)}, \cdots, b_{\beta(n)})\\
=&\sum_{\pi\in NC_2^\alpha(n)}\kappa_\pi(s_{\chi(1), \alpha(1)}, \cdots, s_{\chi(n), \alpha(n)})\varphi_{K(\pi)}(d_{\beta(1)}, \cdots, b_{\beta(n)})\\
=&\sum_{\pi\in P_2^\alpha\cap NC(n)}\prod_{\{p,q\}\in \pi}\varphi(s_{\chi(p), \alpha(p)}s_{\chi(q), \alpha(p)})\varphi_{K(\pi)}(d_{\beta(1)}, \cdots, b_{\beta(n)})\\
=&\sum_{\pi\in P_2^\alpha\cap NC(n)}\prod_{\{p,q\}\in \pi}c_{\chi(p), \chi(q); \alpha(p)}\varphi_{K(\pi)}(d_{\beta(1)}, \cdots, b_{\beta(n)})\\
=&\sum_{\pi\in P_2^\alpha\cap NC(n)}\prod_{\{p,q\}\in \pi}c_{\chi(p), \chi(q); \alpha(p)}\varphi_{\widetilde{\pi}\gamma}(d_{\beta(1)}, \cdots, b_{\beta(n)}),
\end{align*}
where $K(\pi)$ is the Kreweras complement partition of $\pi$ defined in Definition 9.21 in \cite{NS}. By the discussion in Page 376 in \cite{NS}, $K(\pi)=\pi\gamma(=\widetilde{\pi}\gamma)$, if $\pi$ is a non-crossing pairing partition. We have proved (2.9).
\end{proof}
\begin{Remark}
Dykema proved an asymptotic freeness result for family of non-Gaussian random matrices from constant block diagonal matrices in \cite{KD} (Theorem 2.1, \cite{KD} ) similar to Theorem 2.6. The family of random matrices in Theorem 2.6 has an arbitrary positive definite covariance matrix. While the family in \cite{KD} has covariance matrix $I$, the identity matrix. Also, we give a shorter proof than that in \cite{KD}, by applying  some techniques in Lecture 22 in \cite{NS} to  our case.
\end{Remark}
Theorem 4.13 in \cite{PS} shows that asymptotic bi-freeness is pretty much asymptotic freeness  provided all left operators commute with all right operators. The follow result, as a simple consequence of the above theorem and Theorem 4.13 in \cite{PS},  gives such an example that asymptotic freeness implies asymptotic bi-freeness.

\begin{Corollary}
Let $K$ and $K'$ be disjoint index sets, $C_k, ((Y^{i,k}_{N})_{i\in I}, (Y_{N}^{j,k})_{j\in J}), k\in K$ and $D_{k', N}, k'\in K'$ be matrices defined in Theorem 2.6. Then $((L(Y_{N}^{i,k}))_{i\in I}, (R(Y_{N}^{j,k}))_{j\in J}), (L(D_{k'}), R(D_{k'})), k\in K, k'\in K'$,   are  asymptotically bi-free.
\end{Corollary}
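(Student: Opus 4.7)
The plan is to combine Theorem 2.6 of this paper with Theorem 4.13 of \cite{PS}, which says that, under the proviso that within each two-faced family the left component commutes with the right component, asymptotic bi-freeness is equivalent to ordinary asymptotic freeness. The strategy therefore has two steps: first produce classical asymptotic freeness by invoking Theorem 2.6, and then upgrade it to asymptotic bi-freeness via Theorem 4.13 of \cite{PS}.

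First, I verify that the hypotheses of Theorem 2.6 are exactly what is available in the corollary. The two-faced families $((Y^{i,k}_N)_{i\in I}, (Y^{j,k}_N)_{j\in J})$ are assumed to satisfy conditions (1)--(5) of Definition 2.1 together with (2.3)--(2.5), and the diagonal matrices $D_{k',N}$ satisfy the uniform bound (2.8) and have a joint limit distribution. Applying Theorem 2.6 thus yields that the collection
\[
\bigl\{Y^{i,k}_N,\, Y^{j,k}_N,\, D_{k',N}: i\in I,\, j\in J,\, k\in K,\, k'\in K'\bigr\},
\]
regarded as operators on $\mathcal{X}_N$ via the appropriate $L$ or $R$, converges in distribution to the free family $\bigl\{\{(s_{i,k})_{i\in I}, (s_{j,k})_{j\in J}\}_{k\in K},\, \{d_{k'}\}_{k'\in K'}\bigr\}$.

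Second, I apply Theorem 4.13 of \cite{PS}. Its commutation hypothesis requires that within each pair $(L(Y^{i,k}_N), R(Y^{j,k}_N))$, and separately within each pair $(L(D_{k',N}), R(D_{k',N}))$, the left operators commute with the right operators. This is automatic from the bi-matrix construction in Section 1.3: for any matrices $A$ and $B$ of the appropriate kind one has $L(A)R(B) = R(B)L(A)$ on $\mathcal{X}_N$, directly from the definitions of the two homomorphisms $L$ and $R$. Combining this commutation with the classical asymptotic freeness provided by Theorem 2.6, Theorem 4.13 of \cite{PS} converts the statement into asymptotic bi-freeness of the required two-faced families, which is the content of the corollary.

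There is no real obstacle here; the corollary is essentially a packaging of two already-proved results. The only step that requires care is the bookkeeping: one must match the output of Theorem 2.6, namely the convergence of all joint moments of the $Y^{\cdot,k}_N$ and $D_{k',N}$ as elements of $(\L(\mathcal{X}_N), \varphi_N)$, against the precise input format that Theorem 4.13 of \cite{PS} expects. Since Theorem 2.6 gives convergence of \emph{every} alternating moment in the relevant operators, this matching is straightforward and the corollary follows.
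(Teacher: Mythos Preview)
Your proposal is correct and follows essentially the same approach as the paper's own proof: invoke Theorem 2.6 for asymptotic freeness, use the commutation $L(A)R(B)=R(B)L(A)$ (the paper cites Remark 3.2 of \cite{PS} for this), and then apply Theorem 4.13 of \cite{PS} to upgrade to asymptotic bi-freeness.
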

\begin{proof}
Let $X_N^{l, k}=L(Y_{N}^{l, k})$, $X_N^{k'}=L(D_{N, k'})$, $Z_N^{l, k}=R(Y_{N}^{l, k})$, $Z_N^{k'}=R(D_{N, k'})$, for $l\in I\cup J, k\in K, k'\in K'$. Then
$$X_N^{l,k}I_N=Y_N^{l,k}=Z_N^{l,k}I_N, X_N^{k'}I_N=D_{N, k'}=Z_N^{k'}, \forall l\in I\cup J, k\in K, k'\in K'. $$ Moreover, by Remark 3.2 in \cite{PS}, $L(A)R(B)=R(B)L(A)$, for $A, B\in M_N(\L(\X))$. By Theorem 2.6 in this paper and Theorem 4.13 in \cite{PS}, $((X_N^{l,k})_{l\in I\cup J}, (Z_N^{l,k})_{l\in I\cup J}), k\in K, ((X_N^{k'})_{k'\in K'}, (Z_N^{k'})_{k'\in K'})$ are asymptotic bi-free. It implies that $((L(Y_N^{i,k}))_{i\in I}, (R(Y_{N}^{j,k}))_{j\in J}), k\in K, ((L(D_{N,k'})_{k'\in K'}, (R(D_{N, k'}))_{k'\in K'})$ are asymptotic bi-free.



\end{proof}

\begin{Remark} Theorem 2.6 and Corollary 2.8 deal with asymptotic (bi-)freeness of  random two-faced families  of matrices of non-Gaussian random variables from two-faced families of constant {\sl diagonal} matrices. We choose diagonal matrices for two reasons. One is that diagonal matrices are technically easy to be treated in computing joint moments. The other reason is that, theoretically, diagonal matrices are general enough to approximate any measure on $\mathbb{R}$: Let $\mu$ be a probability measure on $\mathbb{R}$ for which all moments exist. Then there exists a sequence $\{D_N\in M_N(\mathbb{C})\}$ of diagonal matrices such that $$\lim_{N\rightarrow \infty}\frac{1}{N}Tr(D_N^m)=\int_\mathbb{R}t^md\mu(t), \forall m\in \mathbb{N}$$ (Exercise 22.27 in \cite{NS}).

Theoretically, one can prove asymptotic bi-freeness of two-faced families in Corollary 2.8 from a two-faced family of constant matrices. But it could be more complicated  technically than the diagonal matrix case.
\end{Remark}

\end{document}